\documentclass[12pt,a4paper,oneside]{amsart}
\usepackage[scale=0.7]{geometry}
\usepackage[colorlinks=true,citecolor=blue]{hyperref}
\newtheorem{theorem}{Theorem}[section]
\newtheorem{lemma}[theorem]{Lemma}

\newtheorem{prop}[theorem]{Proposition}
\usepackage{amsfonts, amsmath, amssymb, amsthm, hyperref}
\usepackage{parskip}
\usepackage{tikz}

\usepackage{txfonts}
\usepackage[T1]{fontenc}

\newcommand{\mb}[1]{\mathbb{#1}}
\DeclareMathOperator{\conv}{conv}
\usepackage{verbatim}
\usepackage{tikz}
\usepackage{courier}
\usepackage{units}
\usepackage{lineno}
\usepackage{booktabs}

\newtheorem{corollary}[theorem]{Corollary}

\newcommand{\cal}{\mathcal}

\title[Combinatorial Colorful Carath\'{e}odory]{A combinatorial
  version of the colorful Carath\'{e}odory theorem}  
\author{Andreas F. Holmsen} 
\thanks{Supported by Basic Science Research Program through the
  National Research Foundation of Korea (NRF) funded by the Ministry
  of Education, Science and Technology (NRF-2010-0021048).}  

\address{Department of Mathematical Sciences, KAIST\\ 291 Daehak-ro, 
Daejeon 305-701\\ South Korea \\ Tel/Fax: +82 42-350-7300/ +82 42-350-2710}


\begin{document}

\begin{abstract} We give the following extension of B\'{a}r\'{a}ny's
  colorful Carath\'{e}odory theorem:   Let $\cal M$ be an oriented
  matroid, $\cal 
  N$ a matroid with rank function $\rho$, both defined on the same 
  ground set $V$ and satisfying $rk ({\cal M}) < rk ({\cal N})$. If every
  $A\subset V$ with $\rho(V - A) < rk ({\cal M})$ contains a positive
  circuit of $\cal{M}$, then there is a positive circuit of $\cal
  M$ which is independent in $\cal N$.  
\end{abstract}

\maketitle

\section{Introduction}

One of the cornerstones of convexity is Carath\'{e}odory's theorem which
states that, given a set $P\subset \mb{R}^d$ and a point $x$ in the
convex hull of $P$, i.e. $x\in \conv P$, there exists a subset
$Q\subset P$  such that $|Q|\leq d+1$ and $x\in \conv Q$. In 1982,
B\'{a}r\'{a}ny \cite{colorcar} gave the following generalization of
Carath\'{e}odory's theorem. 

\begin{theorem}[Colorful Carath\'{e}odory]\label{baraCC}
Let $P_1, \dots, P_{d+1}$ be point sets in $\mb{R}^d$. If $x \in
\bigcap_{i=1}^{d+1}\conv P_i $, then there exists $p_1 \in P_1,
\cdots,  p_{d+1}\in P_{d+1}$ such that $x \in \conv \{p_1, \dots,
p_{d+1}\}$.  
\end{theorem}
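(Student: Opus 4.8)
The plan is to use B\'ar\'any's original argument: among all ``colourful'' simplices pick one whose distance from $x$ is smallest, and show that if that distance is positive a single exchange produces a strictly closer colourful simplex, a contradiction. First I would translate so that $x=0$, and use Carath\'{e}odory to replace each $P_i$ by a finite subset still containing $0$ in its convex hull; then there are only finitely many colourful choices $a_1\in P_1,\dots,a_{d+1}\in P_{d+1}$, so the minimum below is attained.

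Assuming for contradiction that $0\notin\conv\{a_1,\dots,a_{d+1}\}$ for every colourful choice, I would fix a colourful choice minimising $\delta:=\operatorname{dist}(0,\conv\{a_1,\dots,a_{d+1}\})>0$ and let $y$ be the point of that simplex nearest $0$. First-order optimality of $y$ gives the obtuse-angle inequality $\langle a_i,y\rangle\ge|y|^2=\delta^2$ for every $i$. Writing $y=\sum\lambda_i a_i$ with $\lambda_i\ge0$ and $\sum\lambda_i=1$, averaging gives $\delta^2=\langle y,y\rangle=\sum\lambda_i\langle a_i,y\rangle$, which forces $\langle a_i,y\rangle=\delta^2$ whenever $\lambda_i>0$. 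Hence $y$ lies in the convex hull of those $a_i$ that sit on the hyperplane $h=\{z:\langle z,y\rangle=\delta^2\}$, and since $h$ is $(d-1)$-dimensional, Carath\'{e}odory applied inside $h$ yields an index set $S$ with $|S|\le d$ and $y\in\conv\{a_i:i\in S\}$.

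Because $|S|\le d<d+1$ there is a colour $k\notin S$. Now $0\in\conv(P_k)$, whereas $0$ lies strictly on the near side of $h$ (indeed $\langle 0,y\rangle=0<\delta^2$) and the whole simplex lies on the far side; so some $a_k'\in P_k$ must satisfy $\langle a_k',y\rangle<\delta^2$ (otherwise $\conv(P_k)$ would miss $0$). I would then replace $a_k$ by $a_k'$. The new colourful simplex contains $\conv(\{a_i:i\in S\}\cup\{a_k'\})$, hence the segment $[y,a_k']$; and along that segment the squared distance to $0$ is strictly decreasing at $y$, its derivative there being $2\langle y,a_k'-y\rangle=2(\langle a_k',y\rangle-\delta^2)<0$. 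So the new colourful simplex has distance to $0$ strictly less than $\delta$, contradicting minimality. Therefore the smallest distance is $0$, i.e. some colourful simplex contains $x$, as claimed.

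The two points that need care are the attainment of the minimum — dealt with by the finiteness reduction at the start — and the step ``$|S|\le d$'', which is simply Carath\'{e}odory's theorem used in the $(d-1)$-dimensional hyperplane $h$ instead of in $\mathbb{R}^d$; the rest is elementary geometry of nearest points. It is worth noting that this is exactly the skeleton that the oriented-matroid strengthening announced in the abstract should imitate: the separating hyperplane $h$ is replaced by a cocircuit of ${\cal M}$, ``nearest point'' by an extremal choice of a tope together with an independent set contained in its positive part, and the colour-$k$ swap by an exchange governed by the matroid ${\cal N}$ — there the genuinely hard part will be that ${\cal N}$ need not be a partition matroid, so ``replacing $a_k$'' must be replaced by a basis-exchange argument whose feasibility has to be forced from the hypothesis.
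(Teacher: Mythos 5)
Your argument is correct and complete: it is B\'ar\'any's original distance-minimization proof, and each step checks out --- the reduction to finite subsets guarantees the minimum is attained, the first-order condition at the nearest point $y$ gives $\langle a_i,y\rangle\ge\delta^2$ with equality on the support of $y$, Carath\'eodory applied inside the supporting hyperplane $h$ frees a colour $k$, and $0\in\conv P_k$ forces a point of $P_k$ strictly on the near side of $h$, yielding a strictly closer colourful simplex. The paper, however, takes a genuinely different route to this statement: Theorem~\ref{baraCC} is quoted from B\'ar\'any and is recovered as a special case of Theorem~\ref{strongCC}, which is itself derived from the main Theorem~\ref{matroidCC} by taking $\mathcal{M}$ to be the oriented matroid of the vectors $p-x$ and $\mathcal{N}$ the transversal matroid of the colour classes; that main theorem is proved homologically, by identifying the support complex of $\mathcal{M}$ with the nerve of a pseudohemisphere arrangement (Folkman--Lawrence representation plus the Nerve theorem), joining its Alexander dual with the independence complex of $\mathcal{N}$, and invoking Meshulam's criterion (Proposition~\ref{meshulam}) to produce the colourful simplex. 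Your metric exchange argument is more elementary and self-contained, but it leans on nearest points and inner products, which is precisely why it does not transfer to non-realizable oriented matroids or to an arbitrary matroid $\mathcal{N}$ in place of the partition into colours. In particular, the closing speculation in your proposal --- that the oriented-matroid generalization should imitate the exchange skeleton, with cocircuits playing the role of $h$ and a basis-exchange step replacing the colour swap --- is not how the paper proceeds: no minimizing tope or exchange argument appears there, the entire mechanism being replaced by Alexander duality, the K\"unneth formula for joins, and the nerve computations.
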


The name originates from thinking of the $P_i$ as distinct
color classes. The conclusion tells us the point $x$ is
contained in a ``colorful simplex'', that is a simplex whose vertices
are of all distinct colors. Notice that B\'{a}r\'{a}ny's theorem reduces to
Carath\'{e}odory's theorem when the $P_i$ are equal. 

Theorem \ref{baraCC} has many applications in discrete geometry
\cite{mato}, and gives rise to interesting variations of
linear programming \cite{collin}.  
It is easily seen that the hypothesis of Theorem \ref{baraCC} is
not a necessary condition, and the following weakening of the hypothesis was
recently discovered \cite{very, strong}.   

\begin{theorem}[Strong Colorful Carath\'{e}odory] \label{strongCC}
Let $P_1, \dots, P_{d+1}$ be non-empty point sets in $\mb{R}^d$. If
$x \in \bigcap_{1\leq i<j}^{d+1}\conv (P_i \cup P_j)$, then 
there exists $p_1 \in P_1 , \dots, p_{d+1} \in P_{d+1}$ 
 such that $x \in \conv \{p_1, \dots,
p_{d+1}\}$. 
\end{theorem}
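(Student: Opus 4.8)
Here $\operatorname{dist}$ denotes Euclidean distance. I will argue by induction on $d$, grafting a dimension reduction onto the closest-point proof of Theorem~\ref{baraCC}. The base case $d=1$ is immediate: if $P_1,P_2\subseteq\mb R$ are nonempty and $0\in\conv(P_1\cup P_2)$, one checks directly by signs that some $p_1\in P_1$, $p_2\in P_2$ have $0\in\conv\{p_1,p_2\}$. For the inductive step, translate so that $x=0$; since the hypothesis concerns only the hulls $\conv(P_i\cup P_j)$, Carath\'eodory lets us shrink each $P_i$ to a finite set, so assume all $P_i$ finite. If $0\in\conv P_i$ for every $i$ we are done by Theorem~\ref{baraCC}. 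Otherwise, among all colorful choices $p_1\in P_1,\dots,p_{d+1}\in P_{d+1}$ pick one minimizing $\operatorname{dist}\bigl(0,\conv\{p_1,\dots,p_{d+1}\}\bigr)$, set $S=\conv\{p_1,\dots,p_{d+1}\}$ and $r=\operatorname{dist}(0,S)$, and suppose for contradiction that $r>0$.

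Let $y$ be the point of $S$ nearest $0$ (so $\|y\|=r$) and $H=\{z:\langle z,y\rangle=r^2\}$ the hyperplane through $y$ orthogonal to $y$; then $S$ lies in the closed halfspace $H^{+}=\{\langle z,y\rangle\ge r^2\}$, which avoids $0$. A standard argument puts $y$ in the relative interior of a proper face $F=\conv\{p_i:i\in I\}$ of $S$ (so $I\subsetneq\{1,\dots,d+1\}$), with $F\subseteq H$ and $y=\sum_{i\in I}\mu_i p_i$, all $\mu_i>0$. Write $K=\{1,\dots,d+1\}\setminus I$. The classical exchange step still applies to the colors in $K$: if some $k\in K$ had a point $q\in P_k$ with $\langle q,y\rangle<r^2$, then replacing $p_k$ by $q$ keeps $F\ni y$ inside the new colorful simplex while letting it cross strictly to the $0$-side of $H$, so its distance to $0$ is $<r$, contradicting minimality. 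Hence $P_k\subseteq H^{+}$ for every $k\in K$.

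Now I bring in the pairwise hypothesis. Fix $k\in K$; for each $i\in I$ we have $0\in\conv(P_i\cup P_k)$ with $P_k\subseteq H^{+}$, and taking inner products with $y$ in the defining convex combination shows that some point of $P_i$ lies strictly on the $0$-side of $H$. But we cannot exchange any single $p_i$ with $i\in I$, because $y$ uses all of them with positive weight: this is the situation that does not occur in the proof of Theorem~\ref{baraCC}, where every $P_i$ contains $0$ and a single exchange always works. Instead I reduce the dimension: let $\psi$ be the orthogonal projection onto the affine hyperplane $H$, so $\psi$ is affine, $\psi(0)=y$, and $\psi^{-1}(y)$ is the line through $0$ and $y$. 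The images $\psi(P_1),\dots,\psi(P_{d+1})$ are nonempty subsets of $H\cong\mb R^{\,d-1}$ with $y\in\conv(\psi(P_i)\cup\psi(P_j))$ for all $i\ne j$ (the image of the hypothesis under $\psi$). Merging two of these $d+1$ classes into one and invoking the inductive statement in dimension $d-1$ yields a colorful selection $\hat p_1\in P_1,\dots,\hat p_{d+1}\in P_{d+1}$ with $y\in\conv\{\psi(\hat p_1),\dots,\psi(\hat p_{d+1})\}$; lifting through $\psi^{-1}(y)$, the colorful simplex $\conv\{\hat p_1,\dots,\hat p_{d+1}\}$ contains the point $sy$ for some $s\in\mb R$.

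If $s=0$ this colorful simplex contains $0$ and we are done; if $0<s<1$ it contains a point of norm $sr<r$, contradicting minimality. The crux --- and the step I expect to be the main obstacle --- is to rule out the ``overshoot'' $s\ge1$ or $s<0$, in which case the inductively produced simplex meets the line through $0$ and $y$ only beyond $y$ or past $0$, yielding no immediate contradiction. Resolving this should use that $y$ is a \emph{nearest} point together with $P_k\subseteq H^{+}$ for $k\in K$ --- for instance to force the $K$-colors to receive zero weight in the representation $\sum\nu_i\hat p_i=sy$, which gives $s\ge0$, and to bound $s$ from above via the face structure at $y$ --- or else to choose the inductive input, or the minimizing simplex itself, so that $s$ is a priori constrained. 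Everything else above is routine.
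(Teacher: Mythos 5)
Your proof is incomplete, and the gap you flag yourself --- the ``overshoot'' cases $s\ge 1$ and $s<0$ --- is a genuine one, not a technicality. The inductive application inside $H$ is performed on the projected sets $\psi(P_i)$, which retain no memory of the signed distances of the original points to $H$; consequently there is no control whatsoever on where the lifted point $\sum\nu_i\hat p_i$ sits on the line $\mb{R}y$. For instance, nothing prevents the representation $y=\sum\nu_i\psi(\hat p_i)$ from putting all of its weight on colors in $K$, whose points all satisfy $\langle\cdot,y\rangle\ge r^2$, which forces $s\ge 1$; and your suggested patch (forcing the $K$-colors to get weight zero, ``which gives $s\ge 0$'') is false, since the $I$-colored points may lie strictly on the $0$-side of $H$ --- indeed your own use of the pairwise hypothesis shows each $P_i$ with $i\in I$ \emph{must} contain such points, so $s<0$ is equally possible. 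Minimality of $r$ constrains only the one simplex $S$ you started from, not the new simplex produced by induction, so it cannot be invoked to close this case. There are also smaller issues you would need to repair (merging two classes means the induction returns only $d$ points, and the minimizing simplex may be degenerate with $S\subseteq H$, in which case $I=[d+1]$ and $K=\emptyset$), but the overshoot is the fatal one: this is exactly the point where the na\"{\i}ve closest-point argument for Theorem~\ref{baraCC} stops generalizing, which is why the known proofs of Theorem~\ref{strongCC} in \cite{very,strong} require different, global arguments.

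For comparison, the paper does not prove Theorem~\ref{strongCC} by any metric or inductive argument at all: it is obtained as an immediate corollary of Theorem~\ref{matroidCC} by taking $\cal M$ to be the oriented matroid of the vectors $p-x$ and $\cal N$ the transversal matroid of $\{P_1,\dots,P_{d+1}\}$ (the hypothesis $\rho(V-A)<rk({\cal M})$ then forces $A$ to contain some $P_i\cup P_j$, hence a positive circuit). Theorem~\ref{matroidCC} itself is proved topologically, via the Folkman--Lawrence representation, the Nerve theorem, Alexander duality, and the colorful-simplex criterion of Meshulam, following Kalai--Meshulam. If you want an elementary route you would need a substantially new idea to replace the exchange step; as written, the argument does not constitute a proof.
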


For applications of the strengthened version, see \cite{very}.
The goal of this paper is to give a twofold generalization of Theorem
\ref{strongCC}.   

\begin{enumerate}
\item Points in $\mb{R}^d$ will be replaced by an {\em oriented
    matroid}. Every vector configuration in $\mb{R}^d$ gives rise to
  an oriented matroid, but the converse does not hold. In fact, there
  are far more {\em non-realizable} oriented matroids than realizable
  ones. The question of whether the Colorful Carath\'{e}odory theorems extend to
  oriented matroids is a natural one, and has been asked ever since
  B\'{a}r\'{a}ny first introduced his result. (The rank 3 case was
  recently considered in \cite{rankthree}.) 

\item The color classes will be replaced by a {\em matroid}. It was
  noticed by Kalai and Meshulam \cite{KandM} that the colorful
  simplices are actually playing the role of bases of the {\em
    transversal matroid} of the family $\{P_1, \dots, P_{d+1}\}$. In the closely 
  related setting of {\em $d$-Leray complexes}, they showed that the
  transversal matroid can be replaced by an arbitrary matroid. Our
  main result (Theorem \ref{matroidCC}) can be thought of as dual to
  the result of Kalai and Meshulam, and our proof is a
  modification of theirs. 
\end{enumerate}

\subsection{Matroids and oriented matroids}
A {\em matroid} is a combinatorial structure designed to capture the
notion of linear independence in vector spaces. There are numerous
``cryptomorphic'' axiom systems which define a
matroid. For an introduction to matroid theory, see
e.g. \cite{oxley}. The independent sets of a matroid naturally give
rise to a simplicial complex whose topology will be of importance to
us. For more
information in this direction we refer the reader to \cite{bjorn}. For
a matroid $\cal N$ let $rk ({\cal N})$ denote its rank. 

An {\em oriented matroid} can be thought of as a combinatorial
abstraction of a finite vector configuration spanning a vector space
over an {\em ordered field}. As for 
ordinary matroids, there are several cryptomorphic axiom systems which
define an oriented matroid. Of importance to us are the {\em positive
  circuits}. In the analogy of vector spaces, these correspond to
the {\em minimal positive linear dependencies} of the
configuration. For details, we refer the
reader to \cite{OMS, gebertHandbook}. For an oriented matroid $\cal M$
let $rk (\cal M)$ denote the rank of its underlying matroid. 

In this paper all matroids and oriented matroids are considered to be
loopless.

\subsection{Main result}

\begin{theorem} \label{matroidCC}
  Let $\cal M$ be an oriented matroid, $\cal
  N$ a matroid with rank function $\rho$, both defined on the same 
  ground set $V$ and satisfying $rk ({\cal M}) < rk ({\cal N})$. If every
  $A\subset V$ with $\rho(V - A) < rk ({\cal M})$ contains a positive
  circuit of $\cal{M}$, then there is a positive circuit of $\cal M$
  which is independent in $\cal N$.
\end{theorem}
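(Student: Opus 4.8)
The plan is to translate the statement into the language of simplicial complexes and deduce it from a Leray-type obstruction, following the strategy of Kalai and Meshulam. Let $X$ be the simplicial complex on vertex set $V$ whose faces are exactly those $S\subseteq V$ that contain no positive circuit of $\mathcal M$. Then the minimal non-faces of $X$ are precisely the positive circuits of $\mathcal M$, so the conclusion is equivalent to the assertion that $X$ has a minimal non-face which is independent in $\mathcal N$, while the hypothesis becomes the statement that $\rho(V\setminus\sigma)\geq rk(\mathcal M)$ for every face $\sigma\in X$. Arguing by contradiction, suppose no positive circuit of $\mathcal M$ is independent in $\mathcal N$. Then every independent set of $\mathcal N$ is a face of $X$ (an independent non-face would contain a positive circuit, which would itself be independent), i.e.\ the independence complex $IN(\mathcal N)$ is a subcomplex of $X$.

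The first ingredient is that $X$ is $rk(\mathcal M)$-Leray: $\tilde H_i(X[W])=0$ for every $W\subseteq V$ and every $i\geq rk(\mathcal M)$. I would prove this via the Folkman--Lawrence topological representation theorem: realize $\mathcal M$ by an arrangement of oriented pseudospheres on $S^{rk(\mathcal M)-1}$ and let $H_v^{+}$ denote the open positive side of the pseudosphere of $v$. A set $S$ is a face of $X$ exactly when $\bigcap_{v\in S}H_v^{+}\neq\emptyset$, so $X$ is the nerve of $\{H_v^{+}\}_{v\in V}$; since all nonempty intersections of these open sets are contractible, the Nerve Lemma identifies each induced subcomplex $X[W]$, up to homotopy, with an open subset of $S^{rk(\mathcal M)-1}$, whose reduced homology vanishes in dimensions $\geq rk(\mathcal M)$.

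It remains to reach a contradiction from ``$IN(\mathcal N)\subseteq X$, $rk(\mathcal N)>rk(\mathcal M)$, and $\rho(V\setminus\sigma)\geq rk(\mathcal M)$ for all $\sigma\in X$'', given that $X$ is $rk(\mathcal M)$-Leray. Note that $IN(\mathcal N)$ is the independence complex of a loopless matroid of rank $rk(\mathcal N)\geq rk(\mathcal M)+1$, hence, when $\mathcal N$ has no coloops, homotopy equivalent to a nonempty wedge of spheres of dimension $rk(\mathcal N)-1\geq rk(\mathcal M)$. I would run an induction on $rk(\mathcal M)$ (equivalently on $|V|$). The base case $rk(\mathcal M)=1$ is direct: a loopless rank-$1$ oriented matroid partitions $V$ into two sign classes, both nonempty since $V$ itself contains a positive circuit, and its positive circuits are exactly the pairs with one element in each class; since $rk(\mathcal N)\geq 2$ and $\mathcal N$ is loopless, transitivity of parallelism forces the existence of an independent pair meeting both classes, and such a pair is a positive circuit not in $X$ -- contradicting $IN(\mathcal N)\subseteq X$. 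For the inductive step I would fix a vertex $e$, observe that $\operatorname{lk}_X(e)$ is the positive-circuit complex of the contraction $\mathcal M/e$ and that $IN(\mathcal N/e)=\operatorname{lk}_{IN(\mathcal N)}(e)\subseteq\operatorname{lk}_X(e)$, pass (after discarding the loops created in $\mathcal N/e$) to a smaller instance, and verify that it again satisfies $rk(\mathcal M/e)=rk(\mathcal M)-1<rk(\mathcal N/e)$ and the rank-complement hypothesis, using monotonicity of $\rho$ and the fact that the parallel class of $e$ in $\mathcal N$ lies in its closure; the Mayer--Vietoris sequence for $X=X[V\setminus e]\cup\operatorname{st}_X(e)$, together with the Leray bound, is what forbids the homological input of the previous paragraph from surviving the descent.

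I expect the inductive step to be the main obstacle. Contractions can create loops in both $\mathcal M$ and $\mathcal N$, and $\mathcal N$ itself may have coloops, which makes $IN(\mathcal N)$ contractible and removes the homological input; so the induction must be arranged to simultaneously maintain the rank inequality $rk(\mathcal M)<rk(\mathcal N)$, the inclusion $IN(\mathcal N)\subseteq X$, and the rank-complement hypothesis, all without destroying the Leray bound of the first step. Getting all of these to descend together is exactly the point at which the Kalai--Meshulam argument has to be modified, and this is where I anticipate the real work.
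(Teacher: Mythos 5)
Your first half coincides with the paper's: the support complex $X=X_{\mathcal M}$, its identification as the nerve of the open pseudohemispheres via Folkman--Lawrence, and the vanishing of homology via the Nerve theorem are exactly Proposition \ref{homol}. But the derivation of the conclusion from these facts --- the actual content of the theorem --- is not carried out in your proposal, and the ingredients you list are not enough as stated. From ``$X$ is $rk(\mathcal M)$-Leray and $Y_{\mathcal N}\subseteq X$'' the Kalai--Meshulam topological colorful Helly theorem yields only a face $\tau\in X$ with $\rho(V\setminus\tau)\le rk(\mathcal M)$, which is \emph{consistent} with your standing hypothesis $\rho(V\setminus\sigma)\ge rk(\mathcal M)$; no contradiction results. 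The paper gets the needed improvement by a sharper bookkeeping: it forms the join $Z=X^\star * Y'$ of the Alexander dual of $X$ with a disjoint copy of the independence complex, bounds $\eta$ of the induced subcomplexes using the K\"unneth formula, the matroid bound $\eta(Y[S])\ge\rho(S)$, and Alexander duality $\widetilde H_i(X^\star[S])\cong\widetilde H_{|S|-i-3}(lk(V-S,X))$, and then splits into two cases: for $S=V$ the deficit is covered by $\rho(V)=rk(\mathcal N)\ge rk(\mathcal M)+1$, and for proper $S$ by the link-homology bound $\widetilde H_j(lk(T,X))=0$ for $j\ge rk(\mathcal M)-1$ coming from the fact that each nonempty $h_T$ is an open ball in $\mathbb{S}^{rk(\mathcal M)-1}$. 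Meshulam's colorful-simplex criterion then produces the positive circuit directly, with no contradiction argument. None of this machinery (Alexander duality, the join, the colorful-simplex criterion) appears in your sketch.

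The inductive route you propose in its place has, besides being unexecuted, a concrete error: $lk_X(e)$ is \emph{not} the support complex of the contraction $\mathcal M/e$. A positive circuit of $\mathcal M/e$ arises from a circuit of $\mathcal M$ in which $e$ may carry \emph{either} sign, so a set $T$ can contain a positive circuit of $\mathcal M/e$ while $T\cup\{e\}$ contains no positive circuit of $\mathcal M$ (e.g.\ a circuit supported on $\{e,a,b\}$ with $e$ negative and $a,b$ positive). Topologically, $lk_X(e)$ is the nerve of $\{h_v\cap h_e\}_{v\ne e}$ --- an intersection with the \emph{open} hemisphere, i.e.\ an affine picture --- whereas contraction corresponds to intersecting with the boundary pseudosphere $S_e$. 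So the descent does not preserve the structure you need, and the step you yourself flag as ``the real work'' is exactly the gap: the proposal is a correct setup plus an unproven (and, as sketched, flawed) reduction.
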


As an immediate corollary we obtain Theorem \ref{strongCC}. To see
this let $\cal M$ be the oriented matroid of the vector configuration
$V = \{ p-x \; |\; p\in P_1\cup \cdots \cup P_{d+1} \}$ and $\cal N$
the transversal matroid of the family $\{P_1, \dots, P_{d+1}\}$.

Our proof  of Theorem \ref{matroidCC} uses topological methods: The
Folkman-Lawrence representation theorem \cite{folkman} allows us to
represent an oriented matroid as an arrangement of open
pseudohemispheres with nice intersection properties. We may therefore
pass to the nerve complex of the arrangement whose homology can be
determined using the Nerve theorem (see \cite{bjorner}). The rest of
the proof (more 
or less) follows the arguments of Kalai and Meshulam
\cite{KandM}.

\section{Preliminaries}
Here we collect the basic facts needed for the proof of Theorem
\ref{matroidCC}. 
\subsection{Simplicial complexes and homology}
First we review some standard notions from simplicial homology.
Let $X$ be a
simplicial complex on $V$. For $W\subset V$ let \[X[W] = 
\{T \in X \: : \: T\subset W\}\] denote the {\em induced subcomplex} on $W$.
For a simplex $S\in X$ let \[lk (S,X) = \{T \in X \; : \; T\cap S =
\emptyset, T \cup S \in X\}\] denote the {\em link} of $S$. 
Let
$\widetilde{H}_j(X)$ denote the $j$-th {\em reduced homology group} of
$X$ with rational coefficients,
and define
\[\eta(X) = \min \{j \: : \: \widetilde{H}_j(X) \neq 0 \} + 1\]
For simplicial complexes $X$ and $Y$ on disjoint vertex sets, the {\em
  join} $X * Y$ is the simplicial complex on the union of their
vertex sets defined as 
\[X * Y = \{S \cup T \: : \: S\in X, T\in Y\}\]
By the K\"{u}nneth formula for the join of simplicial complexes we
obtain the following.  

\begin{corollary} \label{kunneth}
    $\eta(X * Y) = \eta(X)  + \eta(Y)$
        \end{corollary}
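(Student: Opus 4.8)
The plan is to deduce the identity directly from the K\"{u}nneth formula for joins, which over the rationals takes the clean form
\[\widetilde{H}_{n}(X * Y) \cong \bigoplus_{i+j = n-1} \widetilde{H}_{i}(X) \otimes_{\mathbb{Q}} \widetilde{H}_{j}(Y),\]
with no Tor contributions since $\mathbb{Q}$ is a field. Here the indices $i,j$ are understood to range over all integers $\geq -1$, so that the formula correctly records reduced homology in degree $-1$ as well; I will take for granted that both $X$ and $Y$ have some nonvanishing reduced homology group, so that $\eta(X)$ and $\eta(Y)$ are well defined (this is the only regime in which the corollary gets applied).

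First I would set $a = \eta(X)$ and $b = \eta(Y)$, so that by definition $\widetilde{H}_{a-1}(X) \neq 0$ while $\widetilde{H}_{i}(X) = 0$ for all $i < a-1$, and likewise $\widetilde{H}_{b-1}(Y) \neq 0$ while $\widetilde{H}_{j}(Y) = 0$ for all $j < b-1$. Feeding this into the K\"{u}nneth formula, a summand $\widetilde{H}_{i}(X) \otimes \widetilde{H}_{j}(Y)$ can be nonzero only when $i \geq a-1$ and $j \geq b-1$, hence only when $n-1 = i+j \geq (a-1)+(b-1)$, i.e. $n \geq a+b-1$. Thus $\widetilde{H}_{n}(X*Y) = 0$ for all $n < a+b-1$.

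Next I would verify that degree $n = a+b-1$ actually carries nonzero homology. In that degree the only pair $(i,j)$ with $i+j = n-1$, $i \geq a-1$, and $j \geq b-1$ is $(i,j) = (a-1,\,b-1)$, so the K\"{u}nneth formula collapses to $\widetilde{H}_{a+b-1}(X*Y) \cong \widetilde{H}_{a-1}(X) \otimes_{\mathbb{Q}} \widetilde{H}_{b-1}(Y)$. Since the tensor product of two nonzero vector spaces over a field is again nonzero, this group is nonzero. Combining the two observations gives $\min\{n : \widetilde{H}_{n}(X*Y) \neq 0\} = a+b-1$, and therefore $\eta(X*Y) = (a+b-1)+1 = a+b = \eta(X) + \eta(Y)$.

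There is no real obstacle here: the whole argument is bookkeeping around the K\"{u}nneth formula. The only points deserving a little care are the correct normalization of the join formula with respect to reduced homology — the degree shift by one and the treatment of the degree $-1$ term — and the implicit nondegeneracy hypothesis that $X$ and $Y$ each have nontrivial reduced homology, without which $\eta$ is not defined.
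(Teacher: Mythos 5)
Your proof is correct and is exactly the argument the paper intends: the paper offers no proof beyond invoking the K\"{u}nneth formula for joins, and your bookkeeping (vanishing below degree $a+b-2$, a single nonzero tensor summand in degree $a+b-2$ over the field $\mathbb{Q}$) is the standard way to extract the statement from it. Your side remark about the degenerate case is also consistent with the paper's usage, since if $X$ or $Y$ is acyclic then so is $X*Y$ and the identity holds with the convention $\eta=\infty$.
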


 Let $X$ be a simplicial complex on $V$ and suppose $V\notin X$. The
 {\em Alexander dual} $X^\star$ is the simplicial complex on
 $V$ defined as \[X^\star = \{ T \subset V \: :\:
  V- T \notin X\}\]
The homology of $X$ and $X^\star$ are related by
Alexander duality, which says that if $V\notin X$ then $
\widetilde{H}_i(X^\star) \cong 
  \widetilde{H}_{|V|-i-3}(X)$ for all $-1\leq i \leq |V|-2$. 

  \begin{corollary}\label{alex}
    If $V\notin X$, $S\notin X^\star$, and $T = V - S$, then 
\[ \widetilde{H}_i(X^\star[S]) \cong \widetilde{H}_{|S|-i-3}(lk(T,X)) \] 
  \end{corollary}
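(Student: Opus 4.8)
The plan is to reduce the statement to a single application of Alexander duality, but carried out on the ground set $S$ rather than on $V$. The key is a purely combinatorial identity: the Alexander dual of $X^\star[S]$, formed \emph{with respect to the ground set $S$}, is exactly the link $lk(T,X)$.

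First I would unwind the definitions. Put $Y = X^\star[S]$; since $S\notin X^\star$ by hypothesis we have $S\notin Y$, so the $S$-Alexander dual $Y^\star = \{W\subseteq S : S-W\notin Y\}$ is well defined. For $W\subseteq S$ the set $S-W$ is a subset of $S$, so $S-W\notin Y$ iff $S-W\notin X^\star$ iff $V-(S-W)\in X$; and because $W\subseteq S$ and $T = V-S$, one has $V-(S-W) = T\cup W$. Hence $Y^\star = \{W\subseteq S : T\cup W\in X\}$. On the other side, in $lk(T,X) = \{U\in X : U\cap T = \emptyset,\ U\cup T\in X\}$ the condition $U\cap T=\emptyset$ simply says $U\subseteq S$, and then $U\cup T\in X$ already forces $U\in X$; so $lk(T,X) = \{U\subseteq S : T\cup U\in X\}$. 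Comparing the two descriptions gives $Y^\star = lk(T,X)$.

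Next I would apply Alexander duality to $Y = X^\star[S]$ on the ground set $S$, which has $|S|$ elements; this is legitimate precisely because $S\notin X^\star$, the one place that hypothesis is needed. It yields $\widetilde{H}_i(Y)\cong\widetilde{H}_{|S|-i-3}(Y^\star)$, and substituting $Y = X^\star[S]$ and $Y^\star = lk(T,X)$ gives exactly $\widetilde{H}_i(X^\star[S])\cong\widetilde{H}_{|S|-i-3}(lk(T,X))$.

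The step I would be most careful about is the ground-set bookkeeping: the outer Alexander dual must be taken relative to $S$, not $V$, so the dimension shift is governed by $|S|$, and one should note that vertices of $S$ which do not appear in $X^\star$ cause no difficulty, since combinatorial Alexander duality is stated for a fixed labelled vertex set. Apart from that, the proof is a direct unwinding of the set-theoretic definitions, so I do not expect a genuine obstacle here.
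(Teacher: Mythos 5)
Your proof is correct and matches the paper's (implicit) argument: the corollary is exactly the relative form of combinatorial Alexander duality, obtained by checking that the $S$-relative Alexander dual of $X^\star[S]$ equals $lk(T,X)$ and then applying the duality on the ground set $S$. Your bookkeeping of the identity $V-(S-W)=T\cup W$ and of the role of the hypothesis $S\notin X^\star$ is exactly what is needed.
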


\subsection{The Nerve theorem}

Let ${\cal F} = \{S_v\}_{v\in V}$ be family of sets. The {\em nerve}
$N_{\cal F}$ is the abstract simplicial complex on $V$
whose simplices consists of those $T\subset V$ such that
$\bigcap_{v\in T}S_v \neq\emptyset$. We recall the following version
of the Nerve theorem. (For a proof see e.g. \cite{bjorner}) 

\begin{theorem}\label{nerves}
  Let ${\cal F} =\{S_v\}_{v\in V}$ be a family of open contractible subsets
  of $\mb{R}^n$ such that every non-empty intersection $\bigcap_{v\in
    W}S_v$ is contractible. Then $\cal F$ and $N_{\cal F}$ are
  homotopy equivalent.
\end{theorem}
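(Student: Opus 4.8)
The plan is to realize both the union $U=\bigcup_{v\in V}S_v$ and the geometric realization $|N_{\cal F}|$ as homotopy colimits of one and the same diagram of spaces, indexed by the face poset of the nerve, and then to compare the two. Write $P$ for the face poset of $N_{\cal F}$ (its nonempty simplices ordered by inclusion), and define a diagram $\underline{S}$ over $P$ sending a simplex $\sigma=\{v_0,\dots,v_k\}$ to the intersection $S_\sigma=\bigcap_{v\in\sigma}S_v$, with the evident inclusions $S_\tau\hookrightarrow S_\sigma$ whenever $\sigma\subseteq\tau$. By the definition of the nerve each $S_\sigma$ is nonempty, and by the standing hypothesis each is contractible.

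First I would establish the \emph{projection lemma}: the canonical map $\mathrm{hocolim}\,\underline{S}\to U$ is a homotopy equivalence. This is the step that genuinely uses that $\{S_v\}$ is an \emph{open} cover of $U$. Since $U$ is an open subset of $\mb{R}^n$ it is metrizable, hence paracompact, so the cover admits a locally finite partition of unity $\{\phi_v\}_{v\in V}$ subordinate to $\{S_v\}$. The partition of unity supplies the numerability needed to contract the homotopy fibers of the projection and so identify $\mathrm{hocolim}\,\underline{S}$ with $U$; equivalently, it yields the barycentric map $f\colon U\to|N_{\cal F}|$, $f(x)=\sum_v\phi_v(x)\,e_v$, which is well defined precisely because $\{v:\phi_v(x)>0\}$ is always a simplex of $N_{\cal F}$.

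Next I would invoke \emph{homotopy invariance} of the homotopy colimit. Since every $S_\sigma$ is contractible, the objectwise map $\underline{S}\to\underline{*}$ to the constant one-point diagram is an objectwise homotopy equivalence of diagrams, so it induces a homotopy equivalence $\mathrm{hocolim}\,\underline{S}\simeq\mathrm{hocolim}\,\underline{*}$. Finally I would identify the target: the homotopy colimit of the constant one-point diagram over $P$ is the classifying space (order complex) of $P$, which is the barycentric subdivision $\mathrm{sd}\,N_{\cal F}$, hence homeomorphic to $|N_{\cal F}|$. Chaining the three equivalences gives $U\simeq\mathrm{hocolim}\,\underline{S}\simeq\mathrm{hocolim}\,\underline{*}\simeq|N_{\cal F}|$, which is the asserted homotopy equivalence between $\cal F$ and $N_{\cal F}$.

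The main obstacle is the projection lemma. Homotopy invariance and the identification of the constant homotopy colimit are formal, but showing that the homotopy colimit of the intersection diagram actually recovers the union $U$ requires the numerability of the cover and a careful comparison map, and this is exactly where the openness hypothesis is indispensable. A fully elementary alternative, sidestepping the $\mathrm{hocolim}$ formalism, is to build a homotopy inverse $g\colon|N_{\cal F}|\to U$ by induction over the skeleta of $N_{\cal F}$, using the contractibility of each $S_\sigma$ to extend $g$ across the simplex $\sigma$ into $S_\sigma$, and then to verify $f\circ g\simeq\mathrm{id}$ and $g\circ f\simeq\mathrm{id}$ by the same skeleton-by-skeleton homotopy extension; the bookkeeping of these nested extensions is then the crux of the argument.
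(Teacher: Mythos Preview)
The paper does not prove this theorem at all: it is stated as a quoted result with a reference to Bj\"orner \cite{bjorner}, and is used as a black box in the subsequent arguments. So there is no ``paper's own proof'' to compare against.

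Your outline is a correct and standard route to the Nerve theorem. The three ingredients you isolate --- the projection lemma identifying $\mathrm{hocolim}\,\underline{S}$ with the union $U$ for a numerable open cover, homotopy invariance of the homotopy colimit under the objectwise equivalence $\underline{S}\to\underline{*}$ coming from contractibility of every $S_\sigma$, and the identification of $\mathrm{hocolim}\,\underline{*}$ over the face poset with the barycentric subdivision of $N_{\cal F}$ --- are exactly the components of the modern proof. Your remark that the projection lemma is the only non-formal step, and that it hinges on paracompactness of $U\subset\mb{R}^n$ via a subordinate partition of unity, is accurate. The elementary alternative you sketch (building a homotopy inverse skeleton by skeleton using the contractibility of the $S_\sigma$) is also valid and is essentially Weil's original argument; it trades the homotopy-colimit machinery for a hands-on induction, at the cost of more bookkeeping.
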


\subsection{The independence complex of a matroid}
Let $\cal N$ be a matroid on $V$. The {\em independence complex} of
$\cal N$ is the simplicial complex $Y_{\cal N}$ on $V$ whose simplices are the
independent sets of $\cal N$. In other words, 
\[Y_{\cal N} = \{S\subset V \: : \: S \text{ independent in } \cal N\}\]

The following is a well-known fact. (See e.g. \cite{bjorn, BjKoLo}.)

\begin{lemma}\label{mathomol}
Let $\cal N$ be a matroid on ground set $V$ with rank function $\rho$
and $Y = Y_{\cal N}$ its independence complex. Then $\eta(Y[S]) \geq
\rho (S)$ for 
every non-empty $S\subset V$. 
\end{lemma}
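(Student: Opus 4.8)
The plan is to prove that $\eta(Y[S]) \geq \rho(S)$ for every non-empty $S \subseteq V$, where $Y = Y_{\cal N}$. First observe that $Y[S]$ is itself the independence complex of the restricted matroid ${\cal N}|_S$, which has rank $\rho(S)$. So it suffices to prove the statement when $S = V$: we must show $\widetilde{H}_j(Y_{\cal N}) = 0$ for all $j \leq \rho(V) - 2$, i.e. that the independence complex of a rank-$r$ matroid has vanishing reduced homology below dimension $r-1$. (In fact the well-known sharper statement is that $Y_{\cal N}$ is homotopy equivalent to a wedge of $(r-1)$-spheres, but we only need the connectivity half.)

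The main step is an induction on $|V|$ using a deletion--contraction argument on a non-loop, non-coloop element. Since $\cal N$ is loopless, pick any $v \in V$; if $v$ is a coloop then $Y_{\cal N} = Y_{{\cal N}\setminus v} * \{\emptyset, \{v\}\}$ is a cone and hence contractible with room to spare, so assume $v$ is not a coloop, whence $\rho({\cal N}\setminus v) = \rho({\cal N}) = r$ and $\rho({\cal N}/v) = r - 1$. The key identity is $\operatorname{lk}(\{v\}, Y_{\cal N}) = Y_{{\cal N}/v}$ and $Y_{\cal N}\setminus v = Y_{{\cal N}\setminus v}$ (the deletion of the vertex $v$ from the complex). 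One then feeds these into the long exact sequence of the pair $(Y_{\cal N}, Y_{{\cal N}\setminus v})$; using excision, the relative homology $\widetilde H_j(Y_{\cal N}, Y_{{\cal N}\setminus v})$ is the homology of the star of $v$ modulo its link, which is reduced-homology-isomorphic (via the suspension/cone) to $\widetilde H_{j-1}(\operatorname{lk}(\{v\}, Y_{\cal N})) = \widetilde H_{j-1}(Y_{{\cal N}/v})$. The long exact sequence then reads, in the relevant range,
\[
\widetilde H_j(Y_{{\cal N}\setminus v}) \longrightarrow \widetilde H_j(Y_{\cal N}) \longrightarrow \widetilde H_{j-1}(Y_{{\cal N}/v}).
\]
By the inductive hypothesis the left term vanishes for $j \leq r - 2$ and the right term vanishes for $j - 1 \leq (r-1) - 2$, i.e. for $j \leq r - 2$; hence $\widetilde H_j(Y_{\cal N}) = 0$ for all $j \leq r - 2$, which is exactly $\eta(Y_{\cal N}) \geq r$.

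I expect the main obstacle to be purely bookkeeping rather than conceptual: one must set up the deletion--contraction for simplicial complexes carefully (identifying the link with the contraction's independence complex, and checking the base cases $|V| = 1$ and the coloop case, and that $\rho$ behaves as claimed), and track the indices through excision and the long exact sequence so that the two vanishing ranges align. Since this lemma is standard, an alternative that avoids the homological bookkeeping is to invoke the known result (see \cite{bjorn, BjKoLo}) that $Y_{\cal N}$ is shellable, hence Cohen--Macaulay of dimension $r-1$, which immediately gives $(r-2)$-connectivity of the reduced homology and thus $\eta(Y[S]) \geq \rho(S)$ after restricting; but the deletion--contraction argument is self-contained and is the route I would write out.
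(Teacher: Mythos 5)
Your proposal is correct, but note that the paper does not actually prove this lemma at all: it states it as ``a well-known fact'' and cites \cite{bjorn, BjKoLo}, which is essentially the second route you mention (shellability of the independence complex, hence it is a wedge of $(r-1)$-spheres, hence $(r-2)$-acyclic). Your deletion--contraction argument is a valid self-contained alternative: the reduction to $S=V$ via ${\cal N}|_S$ is clean, the identifications $\operatorname{lk}(\{v\},Y_{\cal N})=Y_{{\cal N}/v}$ and $Y_{\cal N}\setminus v=Y_{{\cal N}\setminus v}$ are correct, the coloop and $|V|=1$ base cases work, and the Mayer--Vietoris/excision sequence with the two vanishing ranges aligning at $j\leq r-2$ does close the induction. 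What it buys is a proof needing only the long exact sequence rather than the machinery of shellability; what it costs is some care at two points you should make explicit if you write it out: (i) the contraction ${\cal N}/v$ may acquire loops (elements parallel to $v$) even though ${\cal N}$ is loopless, which is harmless because a loop simply fails to be a vertex of the independence complex, so $Y_{{\cal N}/v}$ coincides with the independence complex of the simplification and the inductive hypothesis still applies on the smaller ground set; and (ii) one should check that $Y_{{\cal N}\setminus v}$ is nonempty so that reduced homology in degree $-1$ behaves, which holds since $|V|\geq 2$ in the inductive step and the matroid is loopless. Neither point is a gap, just the bookkeeping you anticipated.
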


\subsection{Topological representation of oriented matroids}

Let $\cal M$ be an oriented matroid of rank $d$ on the
ground set $V$. (Oriented matroids are assumed to be loopless.) The
Folkman-Lawrence representation theorem 
\cite{folkman} states that $\cal M$ can be represented as an
arrangement $\{S_v\}_{v\in V}$ of {\em oriented pseudospheres} in
$\mb{S}^{d-1}$. Such an arrangement decomposes $\mb{S}^{d-1}$ into a
regular cell complex whose combinatorial structure encodes the
oriented matroid. (See \cite{OMS} for precise definitions and proofs.)   
Equivalently, $\cal M$ can be represented by the collection of
 {\em open pseudohemispheres} $\{h_v\}_{v\in V}$ in
$\mb{S}^{d-1}$, which have the $S_v$ as boundaries and lie on the
``positive'' sides. The crucial fact for us is the following.

\begin{corollary} \label{hemis}
Let $\cal M$ be an oriented matroid on ground set $V$ and
$\{h_v\}_{v\in V}$ a topological representation by pseudohemispheres.
 The intersection $h_W =
  \bigcap_{w\in W} h_w$ is empty or contractible for every $W\subset
  V$. Moreover, $h_W$ is empty if and only if $W$ contains 
  a positive circuit of $\cal M$.
\end{corollary}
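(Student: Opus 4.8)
The plan is to read everything through the covectors of $\cal M$ supplied by the Folkman--Lawrence representation, and to reduce both assertions to the subarrangement indexed by $W$. Recall that in the representation each point $x\in\mb{S}^{d-1}$ determines a covector $\sigma(x)\in\{+,-,0\}^V$, with $x\in h_v$ exactly when $\sigma(x)_v=+$. Hence $h_W=\{x:\sigma(x)_w=+\text{ for all }w\in W\}$, so that $h_W\neq\emptyset$ if and only if $\cal M$ has a covector positive on all of $W$. The collection $\{S_w\}_{w\in W}$ is again an arrangement of pseudospheres (the defining axioms are inherited by subcollections), and it represents the restriction ${\cal M}|_W$; by the displayed equivalence, $h_W$ is precisely the all-positive region of this subarrangement. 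I would therefore first record that a nonempty $h_W$ is a single region (tope) of $\{S_w\}_{w\in W}$, and that the positive circuits of ${\cal M}|_W$ are exactly the positive circuits of $\cal M$ contained in $W$.

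For contractibility, once $h_W$ is identified with a region of the subarrangement, I would invoke the topological representation theorem directly: the arrangement $\{S_w\}_{w\in W}$ decomposes $\mb{S}^{d-1}$ into a regular cell complex in which every region is an open ball. Thus a nonempty $h_W$ is an open ball, and in particular contractible. In the realizable model this is just the spherical convexity of an intersection of open half-spheres; the content of the representation theorem is that the same conclusion survives in the non-realizable case.

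For the emptiness criterion I would treat the two directions separately. The easy direction is combinatorial: if $C\subseteq W$ is a positive circuit and $x\in h_W$, then $X=\sigma(x)$ is $+$ on the whole support of $C$, while $C$ is $+$ on its own support. Since covectors are orthogonal to circuits and the two supports overlap, there would have to be some index where $X$ and $C$ disagree in sign; but $X_eC_e=+$ for every $e\in C$, a contradiction. Hence $h_W=\emptyset$ whenever $W$ contains a positive circuit. The converse is the substantive step: assuming $W$ contains no positive circuit, i.e. ${\cal M}|_W$ is acyclic, I must produce a covector positive on $W$. This is exactly the oriented-matroid form of Gordan's theorem, that an acyclic oriented matroid possesses an all-positive tope; applying it to ${\cal M}|_W$ and lifting the resulting tope through the subarrangement yields a point of $h_W$.

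The main obstacle is precisely this last step. In the realizable case it is Gordan's theorem and follows from a separation argument, but for a general, possibly non-realizable, oriented matroid the convexity machinery is unavailable, and one must instead appeal to the covector/circuit duality encoded in the axioms (or argue inductively within the topological representation, peeling off one pseudosphere at a time). I would therefore cite the acyclicity characterization from the oriented matroid literature rather than reprove it, and spend the remaining care on the bookkeeping that transports positive circuits, covectors, and regions correctly between $\cal M$ and its restriction ${\cal M}|_W$.
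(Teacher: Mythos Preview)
The paper does not actually prove this corollary: it is stated as a direct consequence of the Folkman--Lawrence topological representation theorem, with the reader referred to \cite{OMS} for ``precise definitions and proofs.'' So there is no argument in the paper to compare against; your proposal supplies exactly the details the paper elects to suppress.

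Your sketch is correct and follows the standard route. Identifying $h_W$ with the all-positive region of the subarrangement $\{S_w\}_{w\in W}$ and invoking that cells of a pseudosphere arrangement are open balls handles contractibility. For the emptiness criterion, the orthogonality of circuits and covectors gives the forward direction cleanly, and you correctly isolate the converse as the substantive step: the oriented-matroid Gordan/acyclicity characterization that ${\cal M}|_W$ has an all-positive covector iff it has no positive circuit. Citing this from the oriented matroid literature (it appears, e.g., as the equivalence of acyclicity with the existence of a positive tope in \cite{OMS}) is appropriate and is precisely what the paper does implicitly. The bookkeeping you flag---that positive circuits of ${\cal M}|_W$ are the positive circuits of $\cal M$ contained in $W$, and that covectors of the restriction lift to covectors of $\cal M$---is routine but worth stating, as you do.
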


\subsection{The support complex of an oriented matroid}

Let $\cal M$ be an oriented matroid on $V$. The {\em support complex}
of $\cal M$ is the simplicial complex $X_{\cal M}$ on $V$ whose
simplices are the subsets of $V$ which do not contain positive
circuits of $\cal M$. That is, \[X_{\cal M} = \{S \subset V \: : \: S
\text{ contains no positive circuit of } \cal M\}\]

\begin{prop}
    \label{homol}
  Let $\cal M$ be an oriented matroid of rank $r$ and $X = X_{\cal M}$
  its support complex. The following hold.
  \begin{enumerate}
  \item $\widetilde{H}_j(X) = 0$ for all $j\geq r$.
    \item $\widetilde{H}_j(lk (S,X)) = 0$ for all $j\geq r-1$ and
      non-empty $S\in X$.
  \end{enumerate}
  \end{prop}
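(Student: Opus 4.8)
The plan is to realise the support complex as the nerve of the pseudohemisphere arrangement representing $\cal M$ and then to read its homology off from the ambient sphere. I would first set $r = rk(\cal M)$ and fix, via the Folkman--Lawrence representation, open pseudohemispheres $\{h_v\}_{v\in V}$ in $\mb{S}^{r-1}$, writing $h_W = \bigcap_{w\in W} h_w$ for $W\subseteq V$. By Corollary \ref{hemis}, $h_W$ is empty exactly when $W$ contains a positive circuit of $\cal M$ and is contractible otherwise; hence, putting $\cal F = \{h_v\}_{v\in V}$, a set $W$ is a simplex of the nerve $N_{\cal F}$ iff $h_W\neq\emptyset$ iff $W$ contains no positive circuit iff $W\in X$. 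Thus $X = N_{\cal F}$. Since each $h_v$ is open and contractible and every non-empty $h_W$ is contractible, the Nerve theorem gives a homotopy equivalence $X\simeq U$, where $U := \bigcup_{v\in V} h_v \subseteq \mb{S}^{r-1}$ (apply Theorem \ref{nerves} inside $\mb{R}^{r-1}$ after deleting a point of $\mb{S}^{r-1}-U$; in the degenerate case $U=\mb{S}^{r-1}$ one uses the form of the Nerve theorem for open covers of a sphere, see \cite{bjorner}, obtaining $X\simeq\mb{S}^{r-1}$). In either case $U$ lies in the $(r-1)$-dimensional sphere, so $\widetilde{H}_j(X)\cong\widetilde{H}_j(U) = 0$ for all $j\geq r$, which is (1).

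For (2) I would use that a link of a nerve is again a nerve. Fix a non-empty $S\in X$; then $h_S\neq\emptyset$, so $h_S$ is contractible. Consider the family $\cal F_S = \{h_v\cap h_S\}_{v\in V-S}$ of open subsets of $h_S$. For $T\subseteq V-S$ we have $\bigcap_{v\in T}(h_v\cap h_S) = h_{T\cup S}$, which by Corollary \ref{hemis} is empty or contractible; in particular every member of $\cal F_S$ is empty or contractible, so the Nerve theorem applies (after discarding empty members, which changes neither nerve nor union) and yields $N_{\cal F_S}\simeq U_S$, where $U_S := \bigcup_{v\in V-S}(h_v\cap h_S)$ is an open subset of $h_S$. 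On the other hand $T\in lk(S,X) = lk(S,N_{\cal F})$ precisely when $T\subseteq V-S$ and $h_{T\cup S} = \bigcap_{v\in T}(h_v\cap h_S)\neq\emptyset$, which is exactly the condition $T\in N_{\cal F_S}$; hence $lk(S,X) = N_{\cal F_S}\simeq U_S$. Assume now $r\geq 2$ (the rank-one case, where $\mb{S}^0$ has two points, is a quick direct check). Being contractible, $h_S$ is not all of $\mb{S}^{r-1}$, so $U_S\subseteq h_S$ is a proper open subset of $\mb{S}^{r-1}$; consequently every connected component of $U_S$ is an open $(r-1)$-manifold, and it is non-compact, since a compact component would be open and closed in $\mb{S}^{r-1}$ and hence equal to the whole connected sphere, contradicting $U_S\subsetneq\mb{S}^{r-1}$. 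As the rational homology of a non-compact connected $(r-1)$-manifold vanishes in degrees $\geq r-1$, so does $\widetilde{H}_j(U_S)$, whence $\widetilde{H}_j(lk(S,X))\cong\widetilde{H}_j(U_S) = 0$ for all $j\geq r-1$, which is (2).

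The routine parts are the two nerve identities $X=N_{\cal F}$ and $lk(S,X)=N_{\cal F_S}$, together with the bookkeeping in degenerate situations --- for instance $S=V$, or $h_v\cap h_S=\emptyset$ for every $v\notin S$ --- in which $lk(S,X)$ collapses to the complex $\{\emptyset\}$ whose only non-vanishing reduced homology sits in degree $-1$, below the relevant range. The real obstacle, and the only point at which I use more than the ambient dimension, is the last homological step of (2): bare dimension of $\mb{S}^{r-1}$ gives vanishing only in degrees $\geq r$, and the extra degree is bought by the observation that $U_S$ is confined to the contractible region $h_S$ and therefore misses a point of $\mb{S}^{r-1}$, so that the top-dimensional homology of each of its manifold components vanishes as well. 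A secondary technicality is that the Nerve theorem is being applied to open covers of subsets of $\mb{S}^{r-1}$ rather than of $\mb{R}^n$; the general form of the theorem takes care of this, and in any case a proper open subset of $\mb{S}^{r-1}$ embeds in $\mb{R}^{r-1}$.
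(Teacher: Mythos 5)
Your proof is correct and follows essentially the same route as the paper: identify $X$ (respectively $lk(S,X)$) with the nerve of the pseudohemisphere arrangement (respectively of its restriction $\{h_v\cap h_S\}_{v\in V-S}$), apply the Nerve theorem, and bound the homology by the dimension of the ambient space. The only cosmetic difference is the final step of (2), where the paper simply invokes $h_S\cong\mb{R}^{r-1}$ and the vanishing of homology of open subsets of $\mb{R}^{r-1}$ in degrees $\geq r-1$, while you reach the same conclusion via non-compactness of the components of a proper open subset of $\mb{S}^{r-1}$.
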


\begin{proof}
Consider a topological representation of $\cal M$ by an arrangement of
pseudohemispheres ${\cal A} = \{h_v\}_{v\in V}$ in $\mb{S}^{r-1}$. 
Corollary \ref{hemis} implies that $X_{\cal M}$ is the nerve
of ${\cal A}$  and that every non-empty intersection $\bigcap_{v\in
  S}h_v$ is contractible. So by the Nerve theorem  $X$
is homotopic to $\bigcup_{v\in V}h_v \subset \mb{S}^{r-1}$, hence
$\widetilde{H}_j(X) = 0$ for all $j\geq r$.   

For the second part, let $\emptyset \neq S\in X$ and let $h_S = \bigcap_{v\in
  S}h_v$. The simplices of $lk(S,X)$ correspond
to subsets $T\subset V\setminus S$ such that the intersection
$\bigcap_{v\in T} h_v \cap h_S$ is non-empty. It follows that $lk(S,X)$ is
the nerve of the family $\{h_v \cap h_S\}_{v\in V\setminus S}$,
and the Nerve theorem implies that $lk(S,X)$ is homotopic
to $\bigcup_{v\in V\setminus S}(h_v\cap h_S) \subset h_S$. Since $h_S$
is homeomorphic to $\mb{R}^{r-1}$ it follows that $\widetilde{H}_j(lk
(S,X)) = 0$ for all $j\geq r-1$. 
\end{proof}

\subsection{Colorful simplices}
Let $Z$ be a simplicial complex on $V$ and $\bigcup_{i=1}^m V_i$ a
partition of $V$. A {\em colorful simplex} of $Z$ is a simplex $S\in
Z$ such that $|S \cap V_i|= 1$ for all $1\leq i \leq m$. Meshulam
\cite{meshulam} gave the following sufficient condition for a
simplicial complex on $\bigcup_{i=1}^m V_i$ to contain a colorful
simplex. (For a short proof based on the Nerve theorem, see \cite{KandM})

\begin{prop} \label{meshulam}
If for all $\emptyset \neq I\subset [m]$ we have
\[ \eta \left( Z\left[ \cup_{i\in I} V_i\right]  \right) \geq |I|\]
then $Z$ contains a colorful simplex.
\end{prop}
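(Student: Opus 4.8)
The plan is to reduce the statement to a topological ``reverse Nerve'' assertion about open covers, and to prove the latter by induction on $m$ using the Mayer--Vietoris sequence. Concretely, I would pass to a geometric realization $|Z|$ and, for each color $i$, let $U_i\subseteq|Z|$ be the union of the open stars of the vertices of $V_i$. Open stars are open and contractible, and the open star of a face is the intersection of the open stars of its vertices, so for every $\emptyset\ne I\subseteq[m]$ the family of open stars of the vertices lying in $\bigcup_{i\in I}V_i$ covers $\bigcup_{i\in I}U_i$ and meets the hypotheses of Theorem \ref{nerves}, its nerve being $Z[\bigcup_{i\in I}V_i]$. Hence $\bigcup_{i\in I}U_i\simeq Z[\bigcup_{i\in I}V_i]$, so the hypothesis of the proposition translates to $\widetilde H_j\bigl(\bigcup_{i\in I}U_i\bigr)=0$ for all $\emptyset\ne I\subseteq[m]$ and all $j\le|I|-2$. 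Since a point of $|Z|$ lies in $\bigcap_{i=1}^m U_i$ exactly when its carrier meets every $V_i$, and any such carrier contains a colorful simplex, it suffices to prove $\bigcap_{i=1}^m U_i\ne\emptyset$.

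I would prove this by induction on $m$. The case $m=1$ reads $\widetilde H_{-1}(U_1)=0$, i.e.\ $U_1\ne\emptyset$. For $m\ge2$, applying the statement to each proper sub-family $\{U_i\}_{i\in J}$, $J\subsetneq[m]$ (whose hypotheses are inherited), shows that every proper intersection $\bigcap_{i\in J}U_i$ is nonempty, so the nerve $N$ of $\{U_1,\dots,U_m\}$ contains the boundary $\partial\Delta_{[m]}$ of the full simplex on $[m]$. If $\bigcap_{i=1}^m U_i=\emptyset$ then $N=\partial\Delta_{[m]}$, a homology $(m{-}2)$-sphere, and the task is to convert this into a nonzero class in $\widetilde H_{m-2}\bigl(\bigcup_{i=1}^m U_i\bigr)$, which contradicts the translated hypothesis for $I=[m]$. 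I would attack this by iterated Mayer--Vietoris: split $\bigcup_{i=1}^m U_i=U_m\cup\bigl(\bigcup_{i<m}U_i\bigr)$, use that $\bigcup_{i<m}U_i\simeq Z[\bigcup_{i<m}V_i]$ has $\widetilde H_j=0$ for $j\le m-3$ and the identity $U_m\cap\bigl(\bigcup_{i<m}U_i\bigr)=\bigcup_{i<m}(U_m\cap U_i)$, and then recurse on this intersection term, each step calling on the hypothesis for a suitable index subset (including subsets that contain $m$, which is what controls the intersection terms) so that the vanishing ranges line up.

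The main obstacle is exactly this homological bookkeeping in the inductive step. The delicacy is that a single $U_i$, that is a single color class, carries no connectivity beyond nonemptiness, so one cannot simply intersect everything with one of them and induct naively; the hypothesis has to be invoked for every index subset, and the Mayer--Vietoris comparisons must be arranged so that only terms of the form $\bigcup_{i\in I}U_i$, and never a bare $U_i$ or a bare intersection $\bigcap_{i\in I}U_i$, enter in the degrees where vanishing is required. This ``reverse'' direction of the Nerve theorem is where essentially all the effort goes, and is the step carried out in \cite{KandM}; the passage to open stars and the base case are routine by comparison.
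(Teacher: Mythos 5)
Your reduction is sound and is in fact the standard one: passing to $|Z|$, taking $U_i$ to be the union of the open stars of the vertices in $V_i$, noting that the common intersection of the open stars of a vertex set $W$ is the (contractible) open star of the face $W$ when $W\in Z$ and empty otherwise, so that the Nerve theorem gives $\bigcup_{i\in I}U_i\simeq Z[\bigcup_{i\in I}V_i]$, and observing that a point of $\bigcap_{i=1}^m U_i$ has a carrier meeting every $V_i$ and hence containing a colorful face. (The paper itself offers no proof of this proposition --- it is quoted from Meshulam \cite{meshulam} with a pointer to \cite{KandM} --- so there is nothing internal to compare against; your setup does match the route indicated by that citation.)

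The problem is that everything after the reduction is the actual content of the proposition, and you have not proved it. The claim that $\widetilde H_j(\bigcup_{i\in I}U_i)=0$ for $j\le|I|-2$ forces $\bigcap_{i=1}^m U_i\ne\emptyset$ is exactly the hard step, and your proposed mechanism --- iterated Mayer--Vietoris --- breaks down in precisely the way you half-acknowledge. Concretely: to run the induction one wants to verify the hypothesis for the family $\{U_i\cap U_m\}_{i<m}$, and the Mayer--Vietoris sequence for $A=\bigcup_{i\in I}U_i$, $B=U_m$ reads
\[ \widetilde H_{j+1}(A\cup B)\longrightarrow \widetilde H_j(A\cap B)\longrightarrow \widetilde H_j(A)\oplus\widetilde H_j(B), \]
and while the outer union term and $\widetilde H_j(A)$ vanish in the needed range, the term $\widetilde H_j(U_m)$ is controlled by the hypothesis only for $j\le -1$; for $j\ge 0$ it is arbitrary, so one obtains only an injection $\widetilde H_j(A\cap B)\hookrightarrow\widetilde H_j(U_m)$ rather than vanishing. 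Already at $m=3$ this yields $\widetilde H_0((U_1\cup U_2)\cap U_3)\hookrightarrow\widetilde H_0(U_3)$, which is no contradiction because a single $U_i$ need not be connected; the same uncontrolled ``bare piece'' appears no matter how you rebracket the unions and intersections. Your closing sentence asserts that the comparisons ``must be arranged so that only terms of the form $\bigcup_{i\in I}U_i$ enter,'' but you do not exhibit such an arrangement, and the natural ones all fail for the reason above; you then defer the step to \cite{KandM}. That is a genuine gap: the missing ingredient is not bookkeeping but the actual argument (Meshulam's original chain-level induction, or the argument carried out in \cite{KandM}), without which the proposal proves only the easy translation of the statement.
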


\section{Proof of Theorem \ref{matroidCC}}

Let $V = \{v_1, v_2, \dots, v_m\}$, $r = rk (\cal M)$, $X = X_{\cal
  M}$ the support complex of $\cal M$, and $Y = Y_{\cal N}$ the
independence complex of $\cal N$. Make a disjoint copy $V' = \{v_1',
v_2', \dots, v_m'\}$ of $V$ and let $Y'$ be an isomorphic copy of $Y$ on
$V'$. Consider the join $Z = X^\star * Y'$ and let $V_i = \{v_i,
v_i'\}$ for $1\leq i \leq m$. 

Notice that a colorful simplex $S \cup T' \in Z$ implies that $T = V -
S$ is independent in $\cal N$. It also implies $T\notin X$ and therefore
$T$ contains a positive circuit of $\cal M$. The strategy is therefore to
apply Proposition \ref{meshulam} to show that $Z$ contains a colorful
simplex. 

For $\emptyset \neq I \subset [m]$ set $S= \{v_i \: : \: i\in I\}$ and
$S' = \{v_i' \: : \: i\in I\}$. By  Corollary \ref{kunneth} and Lemma
\ref{mathomol} we have

\begin{eqnarray*}
  \eta\left( Z\left[ \cup_{i\in I}V_i \right]\right)
  &=&\eta(X^\star[S] * Y'[S'] ) \\ 
 & = &  \eta(X^\star[S]) + \eta(Y[S]) \\
 & \geq & \eta(X^\star[S]) + \rho(S)  
\end{eqnarray*}

If $S\in X^\star$ then $X^\star[S]$ is contractible, which implies
$\eta(X^\star[S]) = \infty > |I|$. We may therefore assume $S\notin
X^\star$ and consequently $T = V - S \in X$. By hypothesis $\cal M$
contains positive circuits, hence $V\notin X$, so by Corollary \ref{alex}
we have \[\widetilde{H}_i(X^\star[S]) \cong
\widetilde{H}_{|S|-i-3}(lk(T,X)) \]
There are two cases to consider. 

\begin{enumerate}

\item If $S = V$ then $X^\star[S] = X^\star$, $T=\emptyset$, and
  $lk(T,X) = X$. The first case of Lemma \ref{homol} implies that
  $\widetilde{H}_i(X^\star) = 0$ for all $i\leq |S|-r-3$, hence
\[\eta(X^\star) \geq |S|-r-1\]
By hypothesis $\rho(V) = rk({\cal N}) > r$, which implies 
\begin{eqnarray*}
  \eta(Z) &\geq &\eta(X^\star) + \rho(V) \\
          &\geq &(|S| - r- 1) + (r +1) =|V|
\end{eqnarray*}

\item If $S$ is a proper subset of $V$, then the second case of Lemma
  \ref{homol} implies that $\widetilde{H}_i(X^\star[S])=0$ for all $i
  \leq |S|-r-2$, hence \[\eta(X^\star[S])\geq |S|-r \]
Since $T = V - S \in X$, $T$ does not contain a positive circuit of
$\cal M$, so by hypothesis $\rho(S) = \rho(V - T) \geq r$. Hence
\begin{eqnarray*}
  \eta(Z[\cup_{i\in I}V_i])& \geq &\eta(X^\star[S]) + \rho(S) \\
                         & \geq & (|S| - r) + r = |I|
\end{eqnarray*}
\end{enumerate}
Proposition \ref{meshulam} therefore implies that $Z$ contains a colorful
simplex. $\qed$

\section{Acknowledgments}
The author would like to thank Imre B\'{a}r\'{a}ny and Hyung Seuk Oh
for their helpful comments and valuable time.

\end{document}